\documentclass[a4paper,10pt,english]{smfart}
\usepackage[english]{babel}
\usepackage[T1]{fontenc}
\usepackage{lmodern}
\usepackage{graphicx}
\usepackage{tabularx}
\usepackage{amsmath,amsthm,amssymb,amsfonts}
\usepackage[a4paper,left=4cm,right=4cm,top=4cm,bottom=4cm]{geometry}
\numberwithin{equation}{section}
\title[Linear growth for certain elliptic fibrations]
{Integral points on quadratic twists and linear growth for certain elliptic fibrations}
\author{Pierre Le Boudec}
\subjclass{$11$D$45$, $11$G$05$, $14$G$05$}
\keywords{Elliptic fibrations, rational points, elliptic curves, quadratic twists, integral points}
\address{Institute for Advanced Study \\ School of Mathematics \\ Einstein Drive \\
Simonyi Hall $-$ \text{Office $111$} \\ Princeton, NJ $08540$ \\ USA}
\email{pleboudec@ias.edu}

\begin{document}

\makeatletter
\def\imod#1{\allowbreak\mkern10mu({\operator@font mod}\,\,#1)}
\makeatother

\newtheorem{lemma}{Lemma}
\newtheorem{theorem}{Theorem}
\newtheorem{corollary}{Corollary}
\newtheorem{proposition}{Proposition}
\newtheorem{conjecture}{Conjecture}

\newcommand{\vol}{\operatorname{vol}}
\newcommand{\D}{\mathrm{d}}
\newcommand{\rank}{\operatorname{rank}}
\newcommand{\Pic}{\operatorname{Pic}}
\newcommand{\Gal}{\operatorname{Gal}}
\newcommand{\meas}{\operatorname{meas}}
\newcommand{\Spec}{\operatorname{Spec}}
\newcommand{\eff}{\operatorname{eff}}
\newcommand{\rad}{\operatorname{rad}}
\newcommand{\sq}{\operatorname{sq}}
\newcommand{\sign}{\operatorname{sign}}

\begin{abstract}
We prove that the number of rational points of bounded height on certain del Pezzo surfaces of degree $1$ defined over $\mathbb{Q}$
grows linearly, as predicted by Manin's conjecture. Along the way, we investigate the average number of integral points of small
naive height on quadratic twists of a fixed elliptic curve with full rational $2$-torsion.
\end{abstract}

\maketitle

\tableofcontents

\section{Introduction}

\subsection{Rational points on elliptic fibrations}

The main goal of this article is to establish sharp bounds for the number of rational points of bounded height on certain
del Pezzo surfaces of degree $1$ defined over $\mathbb{Q}$. In their anticanonical embedding, these surfaces are defined by sextic
forms in $\mathbb{P}(3,2,1,1)$. More precisely, they are isomorphic to a surface $V$ given by an equation of the shape
\begin{equation}
\label{equation dP1}
y^2 = x^3 + F_4(u,v)x + F_6(u,v),
\end{equation}
where the coordinates in $\mathbb{P}(3,2,1,1)$ are denoted by $(y:x:u:v)$ to highlight the elliptic fibration and where
$F_4, F_6 \in \mathbb{Z}[u,v]$ are respectively a quartic and a sextic form such that $4 F_4^3 + 27 F_6^2$ is not
identically $0$.

For $\mathbf{x} = (y:x:u:v) \in \mathbb{P}(3,2,1,1)(\mathbb{Q})$, we can choose coordinates $y,x,u,v \in \mathbb{Z}$ such that for
every prime $p$, either $p \nmid u$ or $p \nmid v$ or $p^2 \nmid x$ or $p^3 \nmid y$. Then we can define an exponential height
function $H : \mathbb{P}(3,2,1,1)(\mathbb{Q}) \to \mathbb{R}_{>0}$ by setting
\begin{equation*}
H(\mathbf{x}) = \max \{ |y|^{1/3}, |x|^{1/2}, |u|, |v| \}.
\end{equation*}
For any Zariski open subset $U$ of $V$, we can introduce the number of rational points of bounded height on $U$, that is
\begin{equation*}
N_{U,H}(B) = \# \{ \mathbf{x} \in U(\mathbb{Q}), H(\mathbf{x}) \leq B \}.
\end{equation*}
A conjecture of Manin (see \cite{MR974910}) predicts the asymptotic behaviour of $N_{V,H}(B)$ as $B$ tends to $+ \infty$,
but the current technology is very far from allowing us to approach it for any surface $V$. A weaker version states that $V$ has
linear growth, by which we mean that there should exist an open subset $U$ of $V$ such that, for any fixed $\varepsilon > 0$,
\begin{equation}
\label{Manin}
N_{U,H}(B) \ll B^{1 + \varepsilon}.
\end{equation}

The only authors who have addressed this problem seem to be Munshi (see \cite{MR2326487} and \cite{MR2429642}) and
Mendes da Costa (see \cite{MdC}).

More precisely, Mendes da Costa established that for any surface $V$ given by an equation
of the shape \eqref{equation dP1}, there exists $\delta > 0$ such that $N_{V,H}(B) \ll B^{3 - \delta}$, where the constant involved
in the notation $\ll$ is independent of the forms $F_4$ and $F_6$. This bound is far from the expectation
\eqref{Manin} but is not at all trivial, which illustrates the difficulty of this problem in general.

As already remarked by Munshi, it is easier to deal with certain specific examples of singular surfaces. The most striking
result in Munshi's works is the following (see \cite[Corollary~$3$]{MR2429642}). Let
$V_{\mathbf{e},\lambda,R} \subset \mathbb{P}(3,2,1,1)$ be the surface defined by
\begin{equation}
\label{equation Munshi}
y^2 = (x - e R(u,v)) (x - \lambda R(u,v)) (x - \overline{\lambda} R(u,v)),
\end{equation}
where $e \in  \mathbb{Z}$, $\lambda$ is a generator of the ring of integers of an imaginary quadratic field, and
$R \in \mathbb{Z}[u,v]$ is a positive definite quadratic form. Then we have
\begin{equation}
\label{bound Munshi}
N_{U_{\mathbf{e},\lambda,R},H}(B) \ll B^{5/4 + \varepsilon},
\end{equation}
where $U_{\mathbf{e},\lambda,R}$ is defined by removing from $V_{\mathbf{e},\lambda,R}$ the subset defined by $y = 0$.
Although impressive, this result is still far from the conjectured upper bound \eqref{Manin}.

Let $e_1, e_2, e_3 \in  \mathbb{Z}$ be three distinct integers and set $\mathbf{e} = (e_1,e_2,e_3)$. We also let
$Q \in \mathbb{Z}[u,v]$ be a non-degenerate quadratic form. In this article, we are interested in the surfaces
$V_{\mathbf{e},Q} \subset \mathbb{P}(3,2,1,1)$ defined by
\begin{equation}
\label{equation surface}
y^2 = (x - e_1 Q(u,v)) (x - e_2 Q(u,v)) (x - e_3 Q(u,v)).
\end{equation}
We let $U_{\mathbf{e},Q}$ be the open subset defined by removing from $V_{\mathbf{e},Q}$ the two subsets given by
$y = 0$ and $Q(u,v) = 0$. It is straightforward to check that all the surfaces defined by the equations \eqref{equation Munshi}
or \eqref{equation surface} have two singularities of type $\mathbf{D}_4$ over $\overline{\mathbb{Q}}$.

Let us note that, all along this article, the constants involved in the notations $\ll$ and $\gg$ may depend on
$\varepsilon$, $\mathbf{e}$ and $Q$.

The main result of this article is the following.

\begin{theorem}
\label{Theorem upper}
Let $\varepsilon > 0$ be fixed. We have the upper bound
\begin{equation*}
N_{U_{\mathbf{e},Q},H}(B) \ll B^{1 + \varepsilon}.
\end{equation*}
\end{theorem}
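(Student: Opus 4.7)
The plan is to exploit the elliptic fibration $V_{\mathbf{e},Q} \to \mathbb{P}^1$ given by projection $(y:x:u:v) \mapsto (u:v)$. Picking integer coordinates $(y,x,u,v) \in \mathbb{Z}^4$ and setting $D := Q(u,v)$, the defining equation
\[
y^2 = (x - e_1 D)(x - e_2 D)(x - e_3 D)
\]
cuts out, for each $(u,v)$ with $D \neq 0$, an elliptic curve $E_D$ that is the quadratic twist by $D$ of the fixed curve $E : y^2 = (x-e_1)(x-e_2)(x-e_3)$, which has full rational $2$-torsion. Estimating $N_{U_{\mathbf{e},Q},H}(B)$ therefore amounts to estimating
\[
\sum_{\substack{(u,v) \in \mathbb{Z}^2 \\ |u|, |v| \leq B, \, Q(u,v) \neq 0}} \# \left\{ (x,y) \in E_{Q(u,v)}(\mathbb{Z}) \,:\, y \neq 0,\ |x| \ll B^2,\ |y| \ll B^3 \right\},
\]
an average, over twists $E_D$ with $D = Q(u,v)$ of magnitude $\ll B^2$, of the number of integral points of \emph{small} naive height (the bound on $|x|$ being comparable to the size of the coefficients of $E_D$).

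To profit from the rational $2$-torsion, I carry out a $2$-descent on each fibre. Every integral point $(x,y) \in E_D(\mathbb{Z})$ with $y \neq 0$ admits a factorization
\[
x - e_i D = d_i s_i^2 \qquad (i = 1, 2, 3),
\]
with squarefree $d_i \in \mathbb{Z}$, $s_i \in \mathbb{Z}$, and $d_1 d_2 d_3$ a perfect square. A standard gcd argument confines every prime dividing $d_i$ to divisors of a constant multiple of $D$ (with constant depending only on $\mathbf{e}$), so the descent tuple $(d_1, d_2, d_3)$ ranges through a set of cardinality $O(B^\varepsilon)$. Eliminating $x$ and $D$ from the three relations above produces the ternary descent equation
\[
(e_2 - e_3)\, d_1 s_1^2 \,+\, (e_3 - e_1)\, d_2 s_2^2 \,+\, (e_1 - e_2)\, d_3 s_3^2 \,=\, 0,
\]
a conic in $\mathbb{P}^2$ whose integer solutions, whenever non-empty, are parametrized in the form $s_i = \phi_i(m,n)$ by integral binary quadratic forms $\phi_i$ depending on $(d_1, d_2, d_3)$.

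Substituting back, $D$ is expressed as a binary quartic form $F_{d_1,d_2,d_3}(m,n)$ via any of the relations $(e_j - e_k) D = d_k \phi_k(m,n)^2 - d_j \phi_j(m,n)^2$. The counting problem thus reduces to bounding, for each admissible descent tuple, the number of quadruples $(u, v, m, n) \in \mathbb{Z}^4$ satisfying $Q(u,v) = F_{d_1,d_2,d_3}(m, n)$, with $(u,v)$ in a box of side $B$ and $(m,n)$ in a smaller box dictated by the constraints $|d_i \phi_i(m,n)^2| \ll B^2$. The total number of admissible pairs $(m,n)$ is $\ll B$ (up to $B^\varepsilon$ factors absorbing the descent data), and for each such pair $N := F_{d_1,d_2,d_3}(m,n)$ is a fixed integer of size $\ll B^2$ represented by the non-degenerate binary quadratic form $Q$ in only $O(|N|^\varepsilon) = O(B^\varepsilon)$ ways. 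Summing yields the target bound $O(B^{1+\varepsilon})$.

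The main obstacle is in making this scheme truly uniform across the whole family of twists. Specifically, one must control how the $\phi_i$ and the quartic $F_{d_1,d_2,d_3}$ depend on the descent tuple, remove overcount coming from common factors of $(s_1, s_2, s_3)$, discard descent conics with no rational point (which contribute nothing), and — most critically — ensure that the bound $O(B^\varepsilon)$ on the number of descent tuples holds \emph{on average over} $D = Q(u,v)$ rather than merely for a single $D$, while simultaneously keeping the representation count for $Q$ uniform in $N$ and separating the loci $y = 0$ and $Q(u,v) = 0$ without blowing up the final exponent.
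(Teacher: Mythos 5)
The skeleton you lay out (fibration, complete $2$-descent, reduction to a ternary conic, representation counting) matches the paper's, but the two places you flag as ``the main obstacle'' are precisely where the proof lives, and your plan as stated breaks down there. Your descent writes $x - e_i D = d_i s_i^2$ with $d_i$ squarefree, so the primes of $d_i$ divide a fixed multiple of $D$; this gives $O(D^\varepsilon)$ descent tuples \emph{for each fixed} $D$, but since $D = Q(u,v)$ ranges up to $B^2$, and worse, since in your final count $D$ is \emph{determined} as $F_{d_1,d_2,d_3}(m,n)$, you cannot treat the tuple $(d_1,d_2,d_3)$ as a short outer sum without circularity. The paper's factorization (Lemmas \ref{descent lemma} and \ref{parametrization}) is crucially finer: it pulls out $\gcd(x - e_1 n, x - e_2 n, x - e_3 n)$ as a separate variable and writes $x - e_i n = d_j d_k w^2 a_i^2 a_j a_k b_i^2$, with coprimality conditions that force $d_i \mid e_j - e_k$. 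The descent coefficients $d_i$ are then \emph{bounded in terms of $\mathbf{e}$ alone}, and the quantities $w, a_i, b_i$ are free variables to be counted, not descent data.

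The other gap is the asserted bound ``$\ll B$'' on admissible $(m,n)$. After parametrizing the conic by binary forms $\phi_i$, the box constraints $|d_i \phi_i(m,n)^2| \ll B^2$ cut out a region whose lattice-point count depends on the discriminants and shape of the $\phi_i$, which in turn vary with $(d_1,d_2,d_3)$; for indefinite $\phi_i$ the regions are hyperbolic, and the uniform bound you need is not automatic. The paper does not parametrize the conic at all; it instead applies a dedicated counting result (Lemma \ref{geometry lemma}, imported from \cite{Bihomogeneous}) directly to the equation $f_1 a_1 b_1^2 + f_2 a_2 b_2^2 + f_3 a_3 b_3^2 = 0$ with box and pairwise-coprimality constraints, obtaining a bound of the shape $(U_1 U_2 U_3)^{2/3}(V_1 V_2 V_3)^{1/3}$ up to a mild correction; a dyadic summation against the height constraint then yields $B(\log B)^{O(1)}$ for the twist-averaged integral-point count (Theorem \ref{Theorem Integral}), from which $N_{U_{\mathbf{e},Q},H}(B) \ll B^{1+\varepsilon}$ follows as you describe. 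Without a substitute for that lemma and for the refined descent that bounds the $d_i$, your outline does not establish the claimed exponent.
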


As in the works of Munshi, the proof of Theorem \ref{Theorem upper} makes use of the natural elliptic fibration to parametrize the
rational points on $U_{\mathbf{e},Q}$. This leads us to investigate the average number of integral points of small naive height on
quadratic twists of a fixed elliptic curve with full rational $2$-torsion. This is the purpose of section \ref{Integral section}.

It is worth mentioning that the analysis of the parametrization of the rational points given by Munshi in \cite{MR2429642} shows
that it should be easy to adapt lemma \ref{geometry lemma} (see section \ref{Geometry section}) to prove that the surfaces
defined by \eqref{equation Munshi} and considered by Munshi also have linear growth.

Another interesting problem is to prove sharp lower bounds for $N_{U_{\mathbf{e},Q},H}(B)$. A simpler way to state this
is to ask what can be said about the quantity
\begin{equation}
\label{definition beta}
\beta_{U_{\mathbf{e},Q}}(B) = \frac{\log N_{U_{\mathbf{e},Q},H}(B)}{\log(B)}.
\end{equation}
In the following, we choose to take $Q(u,v) = uv$, even though similar results could be proved for other
choices of $Q$. We respectively call $V_{\mathbf{e}}$ and $U_{\mathbf{e}}$ the surface and the open subset
corresponding to this choice. We establish the following result.

\begin{corollary}
\label{Corollary beta}
The limit of $\beta_{U_{\mathbf{e}}}(B)$ as $B$ tends to $+ \infty$ exists and equals $1$. More precisely, we have
\begin{eqnarray*}
\beta_{U_{\mathbf{e}}}(B) = 1 + O\left( \frac1{\log \log B} \right).
\end{eqnarray*}
\end{corollary}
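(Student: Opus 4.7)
The plan is to combine the upper bound of Theorem \ref{Theorem upper}, applied with a carefully chosen $\varepsilon = \varepsilon(B) \to 0$, with an elementary construction producing many rational points of bounded height on $U_{\mathbf{e}}$.

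For the lower bound, I would produce at least $\gg B$ points in $U_{\mathbf{e}}(\mathbb{Q})$ of height at most $B$ by restricting to the section $x = c \cdot u v$ for a fixed integer $c \notin \{e_1, e_2, e_3\}$. With this substitution the defining equation becomes
$$ y^2 = k \cdot (uv)^3, \qquad k := (c-e_1)(c-e_2)(c-e_3). $$
Writing $k = s m^2$ with $s$ squarefree, we need $s \cdot uv$ to be a perfect square, which is ensured by taking $u = s \alpha^2$, $v = \beta^2$ with $\alpha, \beta \in \mathbb{Z}_{>0}$ satisfying $\gcd(\alpha, \beta) = \gcd(\beta, s) = 1$. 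The corresponding tuple $(y, x, u, v) = (\pm s^2 m (\alpha \beta)^3, \, c s \alpha^2 \beta^2, \, s \alpha^2, \, \beta^2)$ then has $\gcd(u,v) = 1$ and so is primitive, and its height is $O_{\mathbf{e},c}(B)$ as soon as $\alpha \leq \sqrt{B/|s|}$ and $\beta \leq \sqrt{B}$ (up to constants). The number of such coprime pairs is $\gg_{\mathbf{e},c} B$, and distinct pairs produce distinct fibers $(u:v) = (s \alpha^2 : \beta^2) \in \mathbb{P}^1(\mathbb{Q})$, hence distinct points. As $y \neq 0$ and $uv \neq 0$, these points all belong to $U_{\mathbf{e}}$. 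We conclude $N_{U_{\mathbf{e}}, H}(B) \gg B$, and consequently $\beta_{U_{\mathbf{e}}}(B) \geq 1 - O(1/\log B)$.

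For the upper bound, I would apply Theorem \ref{Theorem upper} and optimize the choice of $\varepsilon$. Since the statement of Theorem \ref{Theorem upper} is qualitative in $\varepsilon$, to extract the quantitative error $1/\log\log B$ one must revisit its proof and make explicit the dependence of the implicit constant on $\varepsilon$. The natural expectation is that this constant grows at most like $\exp(O(1/\varepsilon))$, since the $B^{\varepsilon}$-factor is inherited from divisor-type bounds of the form $d(n) \ll_\varepsilon n^\varepsilon$ whose constants are of this shape. Under this expectation, setting $\varepsilon = 1/\log\log B$ yields
$$ N_{U_{\mathbf{e}}, H}(B) \ll B \cdot \exp\left( O\left( \frac{\log B}{\log\log B} \right) \right), $$
so that $\beta_{U_{\mathbf{e}}}(B) \leq 1 + O(1/\log\log B)$, which together with the lower bound gives the corollary.

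The main obstacle in this plan is not the lower bound, which is elementary once one notices the section $x = c u v$, but rather the quantitative tracking of the $\varepsilon$-dependence throughout the proof of Theorem \ref{Theorem upper}, especially through the averaging of integral points on quadratic twists carried out in section \ref{Integral section}. It is this uniformity that pins down the precise shape $1/\log\log B$ of the error term, as opposed to merely the qualitative statement that the limit of $\beta_{U_{\mathbf{e}}}(B)$ equals $1$.
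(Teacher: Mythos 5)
Your lower bound argument is correct and takes a genuinely different, arguably cleaner route than the paper. You restrict to the section $x = c\,uv$ with $c \notin \{e_1,e_2,e_3\}$, which forces $y^2 = k (uv)^3$ with $k = (c-e_1)(c-e_2)(c-e_3) = s m^2$, and then produce $\gg B$ coprime pairs $(u,v)=(s\alpha^2,\beta^2)$; the coprimality of $u$ and $v$ gives primitive coordinates, and distinct pairs give distinct fibers $(u:v)$. The paper instead produces points via the explicit $2$-torsion parametrization, taking $n = 2(2e_3-e_1-e_2)w^2$ and choosing $(u,v)$ coprime by separating $2$-adic valuations into $w_1$ and $w_2$. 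Both are valid; yours has the advantage of being independent of the $2$-descent machinery, while the paper's ties in directly with the structure already set up for Theorem~\ref{Theorem Integral}.

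Your upper bound plan, however, has a real gap: it rests on the unverified ``expectation'' that the implied constant in Theorem~\ref{Theorem upper} grows like $\exp(O(1/\varepsilon))$, and you flag the ``averaging of integral points on quadratic twists'' in section~\ref{Integral section} as the main obstruction. That worry is misplaced, and examining it would have closed the gap: the statement of Theorem~\ref{Theorem Integral} has \emph{no} $\varepsilon$-dependence at all---it is the clean bound $B(\log B)^{\delta_{\mathbf{e}}}$---because the application of Lemma~\ref{geometry lemma} in its proof is made with the fixed choice $\varepsilon = 1/4$. The only $B^{\varepsilon}$ in the proof of Theorem~\ref{Theorem upper} comes from the divisor count $\#\{(u,v):Q(u,v)=n,\,|u|,|v|\le B\}$. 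For $Q=uv$ this is $\le 2\tau(n)$, and instead of optimizing $\varepsilon$ one can invoke the classical explicit bound $\tau(n) \ll n^{1/\log\log n} \ll B^{2/\log\log B}$ directly, yielding $N_{U_{\mathbf{e}},H}(B) \ll B^{1+3/\log\log B}$ with no $\varepsilon$-tracking. This is exactly what the paper does; had you inspected where the $\varepsilon$ actually arises rather than assuming it spreads through the whole argument, the upper bound would have been immediate.
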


To prove the lower bound $B (\log B)^8 \ll N_{U_{\mathbf{e}},H}(B)$, which is conjecturally best possible, a natural idea is to
make use of universal torsors above $V_{\mathbf{e}}$. Indeed, this strategy has been successful to establish Manin's conjecture
for several examples of singular del Pezzo surfaces of low degree (see \cite{dP2E7} and \cite{D4} for the most striking results).
Hausen and S\"{u}ss \cite[Example $5$.$5$]{MR2671185} have computed the equations of such a torsor and it turns out that proving this
lower bound does not seem to be immediate. It would be interesting to solve this problem.

\subsection{Integral points of small height on quadratic twists}

\label{Integral section}

Given a family of non-rational curves, which is reasonable in some sense, a loose general expectation is that most curves do not
have any integral point. Proving results of this flavour is expected to be very hard, even in some simple cases.
Another expectation is that the smallest (rational point and a fortiori) integral point on the curves having at
least one, is most likely not small. This second statement is easier to approach and the aim of this section is to
investigate the case of a family of quadratic twists of a fixed elliptic curve with full rational $2$-torsion.

For $n \geq 1$, we introduce the elliptic curve $E_{n,\mathbf{e}}$ defined by the equation
\begin{equation*}
y^2 = (x - e_1 n) (x - e_2 n) (x - e_3 n).
\end{equation*}
We instantly check that the curves $E_{1,\mathbf{e}}$ and $E_{n,\mathbf{e}}$ are isomorphic over $\mathbb{Q}(\sqrt{n})$. Our
interest lies in the set of integral points on $E_{n,\mathbf{e}}$, so we set
\begin{equation*}
E_{n,\mathbf{e}}(\mathbb{Z}) = \{ P \in E_{n,\mathbf{e}}(\mathbb{Q}), x(P) \in \mathbb{Z} \},
\end{equation*}
and also
\begin{equation*}
E_{n,\mathbf{e}}^{\ast}(\mathbb{Z}) = \{ P \in E_{n,\mathbf{e}}(\mathbb{Z}), y(P) \neq 0 \}.
\end{equation*}
The elements of $E_{n,\mathbf{e}}^{\ast}(\mathbb{Z})$ will be referred to as the non-trivial integral points on
$E_{n,\mathbf{e}}$.

As already explained, a difficult problem is to obtain upper bounds for the number of $n \leq N$ such
that $E_{n,\mathbf{e}}$ has at least one non-trivial integral point. Following the philosophy described above, it is reasonable
to expect that this set has density $0$ but the proof of this statement seems to be out of reach.

An easier problem is to investigate properties of integral points of bounded height on the curves $E_{n,\mathbf{e}}$ on average
over $n$. Given $P \in E_{n,\mathbf{e}}(\mathbb{Z})$ with coordinates $(x,y) \in \mathbb{Z}^2$, we define its exponential naive height
$\mathcal{H}(P)$ by setting
\begin{equation*}
\mathcal{H}(P) = \max \{ |y|^{1/3}, |x|^{1/2} \}.
\end{equation*}

The following theorem will be the key result in the proof of Theorem \ref{Theorem upper}. It gives lower and upper bounds for
the number of non-trivial integral points of bounded height on the curves $E_{n,\mathbf{e}}$ on average over $n$.

\begin{theorem}
\label{Theorem Integral}
We have the bounds
\begin{equation*}
B \ll \sum_{n \geq 1} \# \{ P \in E_{n,\mathbf{e}}^{\ast}(\mathbb{Z}), \mathcal{H}(P) \leq B \}
\ll B (\log B)^{\delta_{\mathbf{e}}},
\end{equation*}
where $\delta_{\mathbf{e}} = 4$ if $e_1 e_2 e_3 \neq 0$ and $\delta_{\mathbf{e}} = 6$ otherwise.
\end{theorem}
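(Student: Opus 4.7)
The plan is to treat the two bounds separately. The lower bound rests on exhibiting one non-trivial integer point and scaling it, while the upper bound proceeds via a complete $2$-descent that reduces the count to integer points on a ternary conic.

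For the lower bound, I would fix $k \in \mathbb{Z}_{>0}$ large enough that
\[
n_0 := (k^2 - (e_2 - e_1))(k^2 - (e_3 - e_1)) > 0,
\]
and set $x_0 = (e_1 + k^2)\,n_0$ and $y_0 = k\,n_0^2$. A direct computation shows that $x_0 - e_i n_0 = (k^2 + e_1 - e_i)\,n_0$ for $i \in \{1, 2, 3\}$, so
\[
(x_0 - e_1 n_0)(x_0 - e_2 n_0)(x_0 - e_3 n_0) = k^2 n_0^3 \cdot (k^2 - (e_2 - e_1))(k^2 - (e_3 - e_1)) = y_0^2,
\]
and $(x_0, y_0)$ is a non-trivial integer point on $E_{n_0, \mathbf{e}}$. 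Scaling $(n, x, y) = (t^2 n_0, t^2 x_0, t^3 y_0)$ produces a non-trivial integer point on $E_{t^2 n_0, \mathbf{e}}$ of naive height $t\,\mathcal{H}(x_0, y_0)$; letting $t$ run up to $B/\mathcal{H}(x_0, y_0)$ contributes $\gg B$ points to the sum.

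For the upper bound, I would apply the full $2$-descent. Given $(x, y) \in E_{n, \mathbf{e}}^{\ast}(\mathbb{Z})$, the factors $x - e_i n$ multiply to the nonzero square $y^2$, so there exist uniquely determined squarefree integers $d_i$ and integers $z_i$ with $x - e_i n = d_i z_i^2$ and $d_1 d_2 d_3$ a perfect square. Combined with squarefreeness, this forces a factorisation $d_1 = \pm ab$, $d_2 = \pm ac$, $d_3 = \pm bc$ with $a, b, c$ pairwise coprime and squarefree. Pairwise differences of the three descent relations then yield
\[
a(bz_1^2 - cz_2^2) = (e_2 - e_1) n, \quad b(az_1^2 - cz_3^2) = (e_3 - e_1) n, \quad c(az_2^2 - bz_3^2) = (e_3 - e_2) n,
\]
and eliminating $n$ collapses the problem to the ternary conic
\[
(e_3 - e_2)\,ab\,z_1^2 + (e_1 - e_3)\,ac\,z_2^2 + (e_2 - e_1)\,bc\,z_3^2 = 0.
\]
The descent also forces $\operatorname{rad}(abc)$ to divide $n\,\prod_{i<j}(e_i - e_j)$, while $\mathcal{H} \leq B$ translates into $|d_i z_i^2| \leq B^2 + O(n)$ with $n \ll B^2$.

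For each admissible $(a, b, c)$, I would count integer solutions $(z_1, z_2, z_3)$ to the conic in the asymmetric box $|z_i| \ll B/\sqrt{|d_i|}$ via the classical quadratic parametrisation of an isotropic ternary form, then sum over $(a, b, c)$ with a dyadic decomposition on $n$ and on the $|d_i|$ to reach the target $B(\log B)^{\delta_{\mathbf{e}}}$. The case split $\delta_{\mathbf{e}} = 4$ versus $6$ reflects the fact that when some $e_i$ vanishes, the corresponding factor is just $x$, so no constraint is imposed on the primes dividing the associated $d_i$; the support of two of $a, b, c$ is enlarged accordingly and the divisor sum picks up two extra logarithms. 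The main obstacle is obtaining the correct power of $\log B$ while maintaining uniformity in $(a, b, c)$: any too greedy use of the divisor bound would convert the expected $(\log B)^{\delta_{\mathbf{e}}}$ into a $B^{\varepsilon}$, so the argument must genuinely exploit the non-trivial correlation between the $z_i$ forced by the ternary conic rather than treating the three descent variables independently.
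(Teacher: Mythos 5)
Your lower bound construction is correct and is essentially equivalent in spirit to the paper's (the paper scales a single explicit point by a free parameter $w$, which plays the role of your $t$), so that part is fine.

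The upper bound, however, has a genuine gap, and you essentially flag it yourself at the end. The $2$-descent framework you set up (squarefree $d_i$ with $d_1 d_2 d_3$ a square, the factorisation $d_1 = \pm ab$, $d_2 = \pm ac$, $d_3 = \pm bc$, elimination of $n$ to reach the ternary conic) is correct and is the same starting point as the paper's, but what you propose to do next cannot deliver $B(\log B)^{\delta_{\mathbf{e}}}$. The classical quadratic parametrisation of an isotropic ternary conic, summed over the admissible $(a,b,c)$ and a dyadic decomposition, will at best produce $B^{1+\varepsilon}$: the coefficients of your conic involve $a$, $b$, $c$, so for each triple you must uniformly count lattice points on a \emph{different} conic in an asymmetric box, and crude divisor bounds in that step lose precisely the factor you are trying to control. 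The paper's key extra move, which you do not perform, is to push the descent one step further: after extracting $d_1,d_2,d_3$, one also writes each remaining square piece as $z_i = w\,a_i\,b_i$ with $a_i$ squarefree and a web of pairwise coprimality conditions (Lemma~\ref{descent lemma}). This converts the ternary conic into the special bihomogeneous shape $f_1 a_1 b_1^2 + f_2 a_2 b_2^2 + f_3 a_3 b_3^2 = 0$ with \emph{fixed} coefficients $f_i \mid e_j - e_k$, and crucially with $a_i$ ranging over a \emph{separate dyadic box} from $b_i$. The count is then done by an external result (Lemma~\ref{geometry lemma}, from \cite{Bihomogeneous}) whose bound $\ll (U_1U_2U_3)^{2/3}(V_1V_2V_3)^{1/3} M_\varepsilon$ has exactly the exponents needed so that, after summing against the height constraint, only $(\log B)^{O(1)}$ survives. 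Without a lemma of that shape, your plan stalls at exactly the point you identify as ``the main obstacle.''

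One smaller point: your heuristic explanation of $\delta_{\mathbf{e}} = 4$ versus $6$ is not what is actually going on. In the paper the saving when $e_1 e_2 e_3 \neq 0$ comes from the fact that $|x| \leq B^2$ then forces $n \ll B^2$, and this yields \emph{three} separate inequalities $w^2 a_i^2 a_j a_k b_i^2 \ll B^2$ (one for each $i$), rather than the single inequality $|w|^3 a_1^2 a_2^2 a_3^2 b_1 b_2 b_3 \leq B^3$ coming from $|y| \leq B^3$ alone. Summing the $b_i$ against three constraints instead of one is what drops two powers of $\log B$; it is not a matter of the support of the $d_i$ being enlarged.
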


Note that the interest of Theorem \ref{Theorem Integral} mainly lies in the upper bound, and the lower bound implies that it is
sharp up to the factor $(\log B)^{\delta_{\mathbf{e}}}$. Let us note here that with more work, the lower bound could be improved
by a factor $(\log B)^4$ but this would not change anything in our other results.

It is not hard to check that there exists an integer $n \gg B^2$ for which the set
$\{ P \in E_{n,\mathbf{e}}^{\ast}(\mathbb{Z}), \mathcal{H}(P) \leq B \}$ is not empty. Therefore, the upper bound in
Theorem~\ref{Theorem Integral} states that most quadratic twists of $E_{1,\mathbf{e}}$ do not have a non-trivial integral
point of small height.

To be more specific, the upper bound in Theorem \ref{Theorem Integral} allows us to establish the following density statement.

\begin{corollary}
\label{Corollary integral}
Let $A > 6$ be fixed. The set of $n \geq 1$ such that every $P \in E_{n,\mathbf{e}}^{\ast}(\mathbb{Z})$ satisfies
\begin{equation*}
\mathcal{H}(P) > n^{1/2} (\log n)^{-A},
\end{equation*}
has density $1$ in the set of $n \geq 1$ such that $E_{n,\mathbf{e}}^{\ast}(\mathbb{Z}) \neq \emptyset$.
\end{corollary}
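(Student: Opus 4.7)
The plan is to deduce the corollary from Theorem~\ref{Theorem Integral} by a straightforward first-moment argument. Set
$$S(N) = \#\{n \leq N : E_{n,\mathbf{e}}^*(\mathbb{Z}) \neq \emptyset\}$$
and
$$T(N) = \#\{n \leq N : \exists\, P \in E_{n,\mathbf{e}}^*(\mathbb{Z}), \ \mathcal{H}(P) \leq n^{1/2}(\log n)^{-A}\}.$$
The density statement to be proved is equivalent to $T(N) = o(S(N))$ as $N \to +\infty$.

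For the upper bound on $T(N)$, note that the function $f(n) = n^{1/2}(\log n)^{-A}$ is eventually increasing (its logarithmic derivative is $1/(2n) - A/(n \log n)$, which is positive as soon as $\log n > 2A$). Therefore, for all sufficiently large $n \leq N$, one has $f(n) \leq f(N) = N^{1/2}(\log N)^{-A} =: B$. Discarding the $O_A(1)$ exceptional small values of $n$ and applying the upper bound of Theorem~\ref{Theorem Integral} with this value of $B$ gives
$$T(N) \ll 1 + \sum_{n \geq 1} \#\{P \in E_{n,\mathbf{e}}^*(\mathbb{Z}) : \mathcal{H}(P) \leq B\} \ll B (\log B)^{\delta_{\mathbf{e}}} \ll N^{1/2}(\log N)^{\delta_{\mathbf{e}} - A}.$$

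The matching lower bound $S(N) \gg N^{1/2}$ comes from the homogeneity of the defining equation: if $(x,y) \in E_{n,\mathbf{e}}^*(\mathbb{Z})$ then $(m^2 x, m^3 y) \in E_{n m^2, \mathbf{e}}^*(\mathbb{Z})$ for every $m \geq 1$. The lower bound of Theorem~\ref{Theorem Integral} (or the explicit construction mentioned immediately after it) guarantees the existence of at least one $n_0 \geq 1$ with $E_{n_0,\mathbf{e}}^*(\mathbb{Z}) \neq \emptyset$, so $S(N)$ contains the $\asymp (N/n_0)^{1/2}$ integers of the form $n_0 m^2 \leq N$, giving $S(N) \gg N^{1/2}$.

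Combining the two estimates yields $T(N)/S(N) \ll (\log N)^{\delta_{\mathbf{e}} - A}$, which tends to $0$ since by assumption $A > 6 \geq \delta_{\mathbf{e}}$. No substantive obstacle remains once Theorem~\ref{Theorem Integral} is available; the hypothesis $A > 6$ is forced precisely by the worst case $\delta_{\mathbf{e}} = 6$ (occurring when $e_1 e_2 e_3 = 0$), and any reduction of $\delta_{\mathbf{e}}$ would relax this hypothesis accordingly.
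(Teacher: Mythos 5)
Your argument is correct and is essentially the one the paper uses: you define the same two counting functions (the paper writes $\mathcal{N}_A(N)$ for your $T(N)$ and Lemma~\ref{lemma lower bound} for your $S(N)$ bound), you upper-bound $T(N)$ by replacing the threshold $n^{1/2}(\log n)^{-A}$ with $N^{1/2}(\log N)^{-A}$ and invoking the upper bound of Theorem~\ref{Theorem Integral} exactly as the paper does, and you conclude $T(N)=o(S(N))$ from $A>6\geq\delta_{\mathbf{e}}$.

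The one place you diverge, modestly, is the lower bound $S(N)\gg N^{1/2}$. The paper's Lemma~\ref{lemma lower bound} re-derives this from the same explicit family of points it used for the lower bound of Theorem~\ref{Theorem Integral} (taking $n=2(2e_3-e_1-e_2)w^2$). You instead note the structural fact that twisting by a square, $n\mapsto nm^2$, $(x,y)\mapsto(m^2x,m^3y)$, propagates any single non-trivial integral point to $\asymp N^{1/2}$ values of $n\leq N$. Both arguments are elementary and give the same bound; yours is a touch cleaner conceptually, and it also separates cleanly the existence of one $n_0$ from the counting. Your explicit remark that $n^{1/2}(\log n)^{-A}$ is only eventually increasing (so the finitely many small $n$ are to be discarded) is a small point of rigor that the paper glosses over.
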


It is worth pointing out that it is easy to check that if $e_1 e_2 e_3 \neq 0$ then any $P \in E_{n,\mathbf{e}}(\mathbb{Z})$ has to
satisfy $\mathcal{H}(P) \gg n^{1/2}$, so Corollary~\ref{Corollary integral} actually holds for any $A>0$ in this case.

\subsection{Outline of the article}

We start by establishing Theorem \ref{Theorem Integral}. The proof of this result goes in two steps. The first step consists in
using the fact that $E_{n,\mathbf{e}}$ has full rational $2$-torsion to parametrize the integral points on $E_{n,\mathbf{e}}$
using a complete $2$-descent. This is achieved in section \ref{Descent section}. In the second step, we bound the number of
non-trivial integral points of bounded height on the curves $E_{n,\mathbf{e}}$ on average over~$n$. To achieve this, we appeal
to the recent result of the author \cite[Lemma $4$]{Bihomogeneous}. This lemma is stated in section \ref{Geometry section}.

Corollary \ref{Corollary integral} straightforwardly follows from the upper bound in Theorem \ref{Theorem Integral} after noticing
that the number of $n \leq N$ for which the set $E_{n,\mathbf{e}}^{\ast}(\mathbb{Z})$ is not empty is $\gg N^{1/2}$.

Finally, we prove Theorem \ref{Theorem upper} using the natural elliptic fibration and the upper bound in
Theorem \ref{Theorem Integral}. Corollary \ref{Corollary beta} also follows from this upper bound, together with the lower bound
$B \ll N_{U_{\mathbf{e}},H}(B)$.

\subsection{Acknowledgements}

It is a pleasure for the author to thank R\'egis de la Bret\`eche, Timothy Browning, Daniel Loughran, Dave Mendes da Costa,
Peter Sarnak, Arul Shankar and Anders S\"{o}dergren for interesting and stimulating conversations related to the topics of this
article.

The financial support and the perfect working conditions provided by the Institute for Advanced Study are gratefully acknowledged.
This material is based upon work supported by the National Science Foundation under agreement No. DMS-$1128155$. Any opinions,
findings and conclusions or recommendations expressed in this material are those of the author and do not necessarily reflect
the views of the National Science Foundation.

\section{Preliminaries}

\subsection{Descent argument}

\label{Descent section}

In this section, we derive a convenient parametrization of the integral points on $E_{n,\mathbf{e}}$ using the fact that
$E_{n,\mathbf{e}}$ has full rational $2$-torsion. We start by proving the following elementary lemma.

\begin{lemma}
\label{descent lemma}
Let $(y, x_1, x_2, x_3) \in \mathbb{Z}_{\neq 0}^4$ be such that $y^2 = x_1 x_2 x_3$.
There exists a unique way to write
\begin{equation*}
x_i = d_j d_k w^2 a_i^2 a_j a_k b_i^2,
\end{equation*}
for $\{ i, j, k \} = \{ 1, 2, 3 \}$ and
\begin{equation*}
y = d_1 d_2 d_3 w^3 a_1^2 a_2^2 a_3^2 b_1 b_2 b_3,
\end{equation*}
where $(d_1, d_2, d_3, w, a_1, a_2, a_3, b_1, b_2, b_3) \in \mathbb{Z}_{\neq 0}^4 \times \mathbb{Z}_{> 0}^6$
is subject to the conditions $|\mu(a_i)| = 1$ and $\gcd(d_i a_j b_j, d_j a_i b_i) = 1$ for $i, j \in \{1, 2, 3 \}$, $i \neq j$,
and $d_1 d_2 d_3 > 0$.
\end{lemma}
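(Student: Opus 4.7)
My plan is to work prime by prime, after separately handling signs. The sign analysis is quick: from $x_i = d_j d_k w^2 a_i^2 a_j a_k b_i^2$ and positivity of $a_i, b_i$ we get $\sign(x_i) = \sign(d_j d_k)$, while $y = d_1 d_2 d_3 w^3 a_1^2 a_2^2 a_3^2 b_1 b_2 b_3$ gives $\sign(y) = \sign(d_1 d_2 d_3 w)$. Together with the constraint $d_1 d_2 d_3 > 0$, these relations solve uniquely to $\sign(d_i) = \sign(x_i)$ for each $i$ and $\sign(w) = \sign(y)$.

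Next I would fix a prime $p$ and set $\alpha_i = v_p(x_i)$, $\delta_i = v_p(d_i)$, $\omega = v_p(w)$, $\alpha'_i = v_p(a_i) \in \{0,1\}$, and $\beta_i = v_p(b_i)$. The decomposition becomes
\[ \alpha_i = \delta_j + \delta_k + 2\omega + 2\alpha'_i + \alpha'_j + \alpha'_k + 2\beta_i \]
for $\{i,j,k\}=\{1,2,3\}$, while the coprimality condition $\gcd(d_i a_j b_j, d_j a_i b_i) = 1$ translates to $\min(\delta_i + \alpha'_j + \beta_j,\, \delta_j + \alpha'_i + \beta_i) = 0$ for each $i \ne j$. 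Setting $\gamma_i = \alpha'_i + \beta_i$, a short argument reduces the local coprimality to the structural requirement that at most one of the $\delta_i$ be positive, at most one of the $\gamma_i$ be positive, and that if both exist their indices differ. This leaves four possible local configurations.

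Plugging each configuration back into the decomposition formula, I would show that they correspond bijectively to the four mutually exclusive patterns of $(\alpha_1,\alpha_2,\alpha_3)$: all three equal; two equal with the third strictly smaller; two equal with the third strictly larger; or all three distinct. The parity constraint $\alpha_1 + \alpha_2 + \alpha_3 \equiv 0 \pmod 2$, forced by $y^2 = x_1 x_2 x_3$, then guarantees in each case that the parameters $(\omega, \delta_i, \alpha'_i, \beta_i)$ can be recovered as non-negative integers with $\alpha'_i \in \{0,1\}$, and that they are uniquely determined by $(\alpha_1,\alpha_2,\alpha_3)$; for instance in the all-distinct configuration (with $\delta_i,\gamma_j$ the positive ones and $k$ the remaining index), the ordering is $\alpha_i < \alpha_k < \alpha_j$, $\delta_i = \alpha_k - \alpha_i$, $\alpha'_j \equiv \alpha_j - \alpha_k \pmod 2$, $\beta_j = (\alpha_j - \alpha_k - \alpha'_j)/2$, and $\omega = (\alpha_i - \alpha'_j)/2$. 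Piecing the local data together over all primes yields the claimed unique decomposition; the only real obstacle is bookkeeping the four parity cases cleanly, as the underlying algebra is elementary.
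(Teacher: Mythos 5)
Your proof is correct, but takes a genuinely different route from the paper's. The paper works globally and top-down: it first extracts $x = \gcd(x_1,x_2,x_3)$, then the pairwise gcds $d_i = \sign(x_i')\gcd(x_j',x_k')$ of the residuals, then the squarefree--square decomposition $\xi_i = a_i b_i^2$, and finally forces $x = w^2 a_1 a_2 a_3$ from the remaining square identity $z'^2 = x a_1 a_2 a_3$ together with the pairwise coprimality and squarefreeness of the $a_i$. Uniqueness there is implicit in each step being a canonically determined gcd or squarefree part. You instead first solve the sign system (concluding $\sign(d_i)=\sign(x_i)$ and $\sign(w)=\sign(y)$), then localize at each prime $p$ and reduce the coprimality constraints to the combinatorial condition that at most one $\delta_i$ and at most one $\gamma_j = \alpha'_j + \beta_j$ are positive and, if both are, that $i \neq j$; you then match the resulting four configuration types to the four order-patterns of $(\alpha_1,\alpha_2,\alpha_3)$, using the parity constraint $\alpha_1+\alpha_2+\alpha_3 \equiv 0 \pmod 2$ to solve for the local parameters. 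I spot-checked the all-distinct case you wrote out and the remaining three (all equal; two equal with the odd one smaller; two equal with the odd one larger), and the recovery formulas indeed produce non-negative integral $\omega,\delta_i,\beta_i$ with $\alpha'_i\in\{0,1\}$ in each case. The trade-off: the paper's argument is shorter and more constructive, while your local analysis makes the forced uniqueness and the exact role of the parity of $v_p(x_1x_2x_3)$ completely explicit and would generalize more transparently. One small presentational caveat: the count of $13$ raw local configurations collapses to your four \emph{types} only after you quotient by the choice of which index carries the positive $\delta$ or $\gamma$; it would be worth saying that the specific index is itself pinned down by the ordering of the $\alpha_i$, which you do use but do not state before appealing to ``four configurations.''
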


\begin{proof}
Let us set $x = \gcd(x_1, x_2, x_3)$ and let us write $x_i = x x_i'$ for $i \in \{ 1, 2, 3 \}$, where $\gcd(x_1', x_2', x_3') = 1$.
We see that $x \mid y$ and we can thus write $y = x y'$. We obtain
\begin{equation*}
y'^2 = x x_1' x_2' x_3'.
\end{equation*}
Let us now set $d_i = \sign(x_i') \gcd(x_j', x_k')$ for $\{ i, j, k \} = \{ 1, 2, 3 \}$. Let us note that we have $d_1 d_2 d_3 > 0$.
We can write $x_i' = d_j d_k \xi_i$ with $\xi_i > 0$ for $\{ i, j, k \} = \{ 1, 2, 3 \}$, where $\gcd(d_i \xi_j, d_j \xi_i) = 1$
for $i, j \in \{1, 2, 3 \}$, $i \neq j$. Since $d_1 d_2 d_3 \mid y'$, we can write $y' = d_1 d_2 d_3 z$. We thus get
\begin{equation*}
z^2 = x \xi_1 \xi_2 \xi_3.
\end{equation*}
There is a unique way to write $\xi_i = a_i b_i^2$ with $a_i, b_i > 0$ and $|\mu(a_i)| = 1$ for $i \in \{1, 2, 3 \}$. We see that
$b_1 b_2 b_3 \mid z$ so we can write $z = b_1 b_2 b_3 z'$. We finally obtain
\begin{equation*}
z'^2 = x a_1 a_2 a_3.
\end{equation*}
Since $a_1$, $a_2$ and $a_3$ are squarefree and pairwise coprime, this implies that we can write $x = w^2 a_1 a_2 a_3$ and
$z' = w a_1 a_2 a_3$, which completes the proof.
\end{proof}

Lemma \ref{descent lemma} immediately implies the following result, which provides us with the desired parametrization of the
non-trivial integral points on $E_{n,\mathbf{e}}$.

\begin{lemma}
\label{parametrization}
There is a bijection between the set of non-trivial integral points on $E_{n,\mathbf{e}}$ and the set of
$(d_1, d_2, d_3, w, a_1, a_2, a_3, b_1, b_2, b_3) \in \mathbb{Z}_{\neq 0}^4 \times \mathbb{Z}_{> 0}^6$
satisfying, for $\{i, j, k \} = \{1, 2, 3 \}$, the equations
\begin{equation*}
(e_i - e_j) n = d_k w^2 a_1 a_2 a_3 (d_i a_j b_j^2 - d_j a_i b_i^2),
\end{equation*}
and the conditions $|\mu(a_i)| = 1$ and $\gcd(d_i a_j b_j, d_j a_i b_i) = 1$ for $i, j \in \{1, 2, 3 \}$, $i \neq j$,
and $d_1 d_2 d_3 > 0$. This bijection is given, for $P \in E_{n,\mathbf{e}}^{\ast}(\mathbb{Z})$ with coordinates
$(x,y) \in \mathbb{Z}^2$, by
\begin{align*}
x & = e_i n + d_j d_k w^2 a_i^2 a_j a_k b_i^2, \\
y & = d_1 d_2 d_3 w^3 a_1^2 a_2^2 a_3^2 b_1 b_2 b_3,
\end{align*}
for $\{i, j, k \} = \{1, 2, 3 \}$.
\end{lemma}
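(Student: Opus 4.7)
The plan is to derive the bijection directly from Lemma \ref{descent lemma} by making the substitution $x_i = x - e_i n$ for $i \in \{1,2,3\}$, which turns the defining equation of $E_{n,\mathbf{e}}$ into the factorization hypothesis of the previous lemma.

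In the forward direction, I would start with $P = (x,y) \in E_{n,\mathbf{e}}^{\ast}(\mathbb{Z})$ and set $x_i = x - e_i n$. Because the $e_i$ are distinct and $y \neq 0$, each $x_i$ is nonzero, so $(y, x_1, x_2, x_3) \in \mathbb{Z}_{\neq 0}^4$ with $y^2 = x_1 x_2 x_3$. Lemma \ref{descent lemma} then produces a unique tuple $(d_1, d_2, d_3, w, a_1, a_2, a_3, b_1, b_2, b_3) \in \mathbb{Z}_{\neq 0}^4 \times \mathbb{Z}_{>0}^6$ satisfying the stated coprimality, squarefreeness, and sign conditions, and providing the factorizations for the $x_i$ and for $y$. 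The three identities $(e_i - e_j) n = x_j - x_i$ then become, after factoring out the common term $d_k w^2 a_1 a_2 a_3$ from the expressions $d_j d_k w^2 a_i^2 a_j a_k b_i^2$ and $d_i d_k w^2 a_j^2 a_i a_k b_j^2$, exactly the three equations of the lemma.

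In the reverse direction, given a tuple satisfying the stated equations and conditions, I would define $x_i := d_j d_k w^2 a_i^2 a_j a_k b_i^2$ and $y := d_1 d_2 d_3 w^3 a_1^2 a_2^2 a_3^2 b_1 b_2 b_3$, both in $\mathbb{Z}$. The three equations say that $x_i + e_i n$ is independent of $i$; call this common value $x \in \mathbb{Z}$. Then $x - e_i n = x_i$, and a direct multiplication shows $x_1 x_2 x_3 = d_1^2 d_2^2 d_3^2 w^6 a_1^4 a_2^4 a_3^4 b_1^2 b_2^2 b_3^2 = y^2$, so $(x,y) \in E_{n,\mathbf{e}}^{\ast}(\mathbb{Z})$.

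The two constructions are mutually inverse by the uniqueness clause of Lemma \ref{descent lemma}: starting from $(x,y)$, applying the forward map and then the inverse map returns the same $(x,y)$ since both $x_i$'s and $y$ are recovered; starting from the tuple and going back, uniqueness of the decomposition of $(y, x_1, x_2, x_3)$ forces the same tuple. The only mild obstacle is bookkeeping — making sure every factor in the expression for $x_i - x_j$ cleanly produces $a_1 a_2 a_3$ (not $a_i a_j a_k$ with subscripts out of order) and that the sign/positivity conventions in Lemma \ref{descent lemma} match those claimed here — but no new ideas are needed.
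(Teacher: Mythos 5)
Your proposal is correct and follows precisely the route the paper has in mind: the paper gives no written proof, merely stating that Lemma~\ref{descent lemma} ``immediately implies'' the result, and your argument --- setting $x_i = x - e_i n$, invoking the descent lemma and its uniqueness clause, and checking the three difference identities $(e_i-e_j)n = x_j - x_i = d_k w^2 a_1a_2a_3(d_ia_jb_j^2 - d_ja_ib_i^2)$ --- is exactly the verification being left to the reader. The one small inaccuracy is that each $x_i\neq 0$ follows from $y\neq 0$ alone via $y^2 = x_1x_2x_3$; the distinctness of the $e_i$ is not needed for this particular point.
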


\subsection{Geometry of numbers}

\label{Geometry section}

The following lemma follows from the recent work of the author \cite[Lemma~$4$]{Bihomogeneous}. It draws upon both geometry of numbers
and analytic number theory tools, and will be the key result in the proof of Theorem \ref{Theorem Integral}.

\begin{lemma}
\label{geometry lemma}
Let $\mathbf{f} = (f_1,f_2,f_3) \in \mathbb{Z}_{\neq 0}^3$ be a vector satisfying the conditions $\gcd(f_i, f_j) = 1$ for
$i, j \in \{1, 2, 3 \}$, $i \neq j$, and let $U_i, V_i \geq 1$ for $i \in \{ 1, 2, 3 \}$. Let also
$N_{\mathbf{f}} = N_{\mathbf{f}}(U_1,U_2,U_3,V_1,V_2,V_3)$ be the number of vectors $(u_1,u_2,u_3) \in \mathbb{Z}_{\neq 0}^3$ and
$(v_1,v_2,v_3) \in \mathbb{Z}_{\neq 0}^3$ satisfying $|u_i| \leq U_i$, $|v_i| \leq V_i$ for $i \in \{ 1, 2, 3 \}$, and the equation
\begin{equation*}
f_1 u_1 v_1^2 + f_2 u_2 v_2^2 + f_3 u_3 v_3^2 = 0,
\end{equation*}
and such that $\gcd(u_i v_i, u_j v_j) = 1$ for $i, j \in \{1, 2, 3 \}$, $i \neq j$. Let $\varepsilon > 0$ be fixed. We have the bound
\begin{equation*}
N_{\mathbf{f}} \ll_{\mathbf{f}} (U_1 U_2 U_3)^{2/3} (V_1 V_2 V_3)^{1/3} M_{\varepsilon}(U_1, U_2, U_3),
\end{equation*}
where
\begin{equation*}
M_{\varepsilon}(U_1, U_2, U_3) = 1 + \max_{\{ i, j, k \} = \{ 1, 2, 3 \} } (U_i U_j)^{-1/2 + \varepsilon} \log 2U_k.
\end{equation*}
\end{lemma}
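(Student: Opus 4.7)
Since the paper states that this lemma is obtained by specialization of \cite[Lemma~$4$]{Bihomogeneous}, my plan is to describe how I would prove the underlying estimate directly. The equation $f_1 u_1 v_1^2 + f_2 u_2 v_2^2 + f_3 u_3 v_3^2 = 0$ is bihomogeneous of bidegree $(1,2)$ in $(\mathbf{u},\mathbf{v})$, and the symmetry of the target bound in the $V_i$'s (exponent $1/3$ on each) strongly suggests fixing $\mathbf{v}$ first and treating the equation as linear in $\mathbf{u}$.

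First, I would fix $\mathbf{v} \in \mathbb{Z}_{\neq 0}^3$ with $|v_i| \leq V_i$ and view the equation $(f_1 v_1^2) u_1 + (f_2 v_2^2) u_2 + (f_3 v_3^2) u_3 = 0$ as defining a rank-$2$ sublattice $\Lambda(\mathbf{v}) \subset \mathbb{Z}^3$ of covolume $\|(f_i v_i^2)_i\| / \gcd(f_1 v_1^2, f_2 v_2^2, f_3 v_3^2)$. Applying Davenport's theorem bounds the number of $\mathbf{u} \in \Lambda(\mathbf{v})$ in the box $\prod_i [-U_i, U_i]$ in terms of the successive minima of $\Lambda(\mathbf{v})$ measured against this box.

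Next, I would sum over $\mathbf{v}$ with $|v_i| \leq V_i$. For the $\mathbf{v}$ where both successive minima are comparable to $\sqrt{\mathrm{covol}(\Lambda(\mathbf{v}))}$, the contribution produces the main term $(U_1 U_2 U_3)^{2/3} (V_1 V_2 V_3)^{1/3}$; the exponent $2/3$ on the $U_i$'s reflects the three projections of $\Lambda(\mathbf{v})$ onto pairs of coordinates, averaged against the size of the corresponding pair $U_i U_j$. For the degenerate $\mathbf{v}$ where $\lambda_1(\Lambda(\mathbf{v}))$ is small --- which is the source of the correction factor $M_\varepsilon$ --- I would remove the coprimality conditions $\gcd(u_i v_i, u_j v_j) = 1$ by M\"obius inversion, and bound the resulting divisor sums by $(\log)$ factors, exploiting the fact that a small first minimum forces arithmetic coincidences between the coordinates that the coprimality conditions then sharply restrict.

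The main obstacle will be the bookkeeping needed to merge the geometry-of-numbers estimate with the coprimality conditions: because coprimality is imposed on the products $u_i v_i$ rather than on the $u_i$'s and $v_i$'s separately, the natural M\"obius-inversion unravelling couples the two groups of variables in a way that makes the summation over $\mathbf{v}$ delicate. The precise shape $1 + \max_{\{i,j,k\}=\{1,2,3\}} (U_i U_j)^{-1/2+\varepsilon} \log 2U_k$ of $M_\varepsilon$ is a tight accounting of the worst-case loss from this coupling, and obtaining exactly this shape --- rather than a blanket loss of a full logarithmic factor --- is really the crux of the argument.
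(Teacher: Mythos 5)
The paper does not supply a proof of this lemma; it simply states that it follows from \cite[Lemma~$4$]{Bihomogeneous} and leaves the argument there. So strictly speaking there is no internal proof to compare against, and your plan of giving a direct proof of the underlying count is a legitimate thing to attempt. The general framework you identify --- fix $\mathbf{v}$, treat the equation as a linear form cutting out a rank-$2$ sublattice $\Lambda(\mathbf{v})$ of $\mathbb{Z}^3$, bound the lattice points in the box via geometry of numbers, and use M\"obius inversion to unravel the coprimality constraints --- is the natural one, and I have no objection to it as an outline. In particular the observation that the $(U_1U_2U_3)^{2/3}(V_1V_2V_3)^{1/3}$ main term should emerge from projecting $\Lambda(\mathbf{v})$ onto the three coordinate planes and summing over $\mathbf{v}$ is sound in spirit, though you would actually want to pick, for each $\mathbf{v}$, the projection direction $k$ for which $|f_k v_k^2|$ is largest (to make the covolume of the projected lattice as large as possible) rather than ``averaging'' the three projections; the geometric mean shape of the answer then falls out of the dyadic summation over the ranges of $|v_k|$, not from averaging projections for a fixed $\mathbf{v}$.

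The real problem is that the proposal stops precisely where the proof has to start. You write yourself that ``obtaining exactly this shape [of $M_\varepsilon$] --- rather than a blanket loss of a full logarithmic factor --- is really the crux of the argument,'' and the sketch gives no argument for it: there is no treatment of the secondary geometry-of-numbers term $O(U/\lambda_1 + 1)$ coming from lattices with a short first minimum, no explanation of how the coprimality conditions $\gcd(u_iv_i,u_jv_j)=1$ (which are what keep those degenerate $\mathbf{v}$ under control) actually get exploited, and no indication of how the delicate coupling between the two variable groups introduced by M\"obius inversion is summed out without losing extra powers of $\log$. Naming the obstacle is not the same as overcoming it; as written the proposal is an outline of a plausible strategy with the hardest step left as an acknowledged gap, and it therefore cannot be accepted as a proof of the stated bound.
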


\section{Integral points of small height on quadratic twists}

\subsection{Proof of Theorem \ref{Theorem Integral}}

Let us first prove the upper bound in Theorem~\ref{Theorem Integral}. Lemma \ref{parametrization} asserts that
$(y,x) \in \mathbb{Z}_{\neq 0} \times \mathbb{Z}$ satisfies the equation
\begin{equation}
\label{equation curve}
y^2 = (x - e_1 n) (x - e_2 n) (x - e_3 n),
\end{equation}
if and only if $x$ and $y$ can be written, for $\{i, j, k \} = \{1, 2, 3 \}$, as
\begin{align*}
x & = e_i n + d_j d_k w^2 a_i^2 a_j a_k b_i^2, \\
y & = d_1 d_2 d_3 w^3 a_1^2 a_2^2 a_3^2 b_1 b_2 b_3,
\end{align*}
where $(d_1, d_2, d_3, w, a_1, a_2, a_3, b_1, b_2, b_3) \in \mathbb{Z}_{\neq 0}^4 \times \mathbb{Z}_{> 0}^6$ satisfies,
for $\{i, j, k \} = \{1, 2, 3 \}$, the equations
\begin{equation}
\label{equations n}
(e_i - e_j) n = d_k w^2 a_1 a_2 a_3 (d_i a_j b_j^2 - d_j a_i b_i^2),
\end{equation}
and the conditions $|\mu(a_i)| = 1$ and $\gcd(d_i a_j b_j, d_j a_i b_i) = 1$ for $i, j \in \{1, 2, 3 \}$, $i \neq j$,
and $d_1 d_2 d_3 > 0$. The equations \eqref{equations n} can have a solution $n \in \mathbb{Z}_{> 0}$ only if
\begin{equation*}
(e_2 - e_3) d_2 d_3 a_1 b_1^2 + (e_3 - e_1) d_1 d_3 a_2 b_2^2 + (e_1 - e_2) d_1 d_2 a_3 b_3^2 = 0.
\end{equation*}
Moreover, since $e_1$, $e_2$ and $e_3$ are distinct, there is at most one such solution $n \in \mathbb{Z}_{> 0}$.
The conditions $\gcd(d_i, d_j a_i b_i) = 1$ imply that $d_i \mid e_j - e_k$ for $\{i, j, k\} = \{1, 2, 3 \}$ so we can write
$e_2 - e_3 = d_1 c_1$, $e_3 - e_1 = d_2 c_2$ and $e_1 - e_2 = d_3 c_3$. We obtain the equation
\begin{equation*}
c_1 a_1 b_1^2 + c_2 a_2 b_2^2 + c_3 a_3 b_3^2 = 0.
\end{equation*}
Let us call $h = \gcd (c_1, c_2, c_3)$ and let us write $c_i = h f_i$ for $i \in \{1, 2, 3 \}$. We thus have $\gcd(f_1, f_2, f_3) = 1$.
From the two relations $d_1 f_1 + d_2 f_2 + d_3 f_3 = 0$ and
\begin{equation}
\label{equation final}
f_1 a_1 b_1^2 + f_2 a_2 b_2^2 + f_3 a_3 b_3^2 = 0,
\end{equation}
we deduce that $\gcd(f_i, f_j) = 1$ for $i, j \in \{1, 2, 3\}$, $i \neq j$.

From now on, we use the notation $\mathbf{f} = (f_1, f_2, f_3)$. We let $\mathcal{N}_{\mathbf{f}}(B)$ be the number of
$(w, a_1, a_2, a_3, b_1, b_2, b_3) \in \mathbb{Z}_{\neq 0} \times \mathbb{Z}_{> 0}^6$ satisfying the equation \eqref{equation final},
the inequality
\begin{equation*}
|w|^3 a_1^2 a_2^2 a_3^2 b_1 b_2 b_3 \leq B^3,
\end{equation*}
and the conditions $\gcd(a_i b_i, a_j b_j) = 1$ for $i, j \in \{1, 2, 3 \}$, $i \neq j$. The investigation above shows that
\begin{equation*}
\sum_{n \geq 1} \# \{ P \in E_{n,\mathbf{e}}^{\ast}(\mathbb{Z}), \mathcal{H}(P) \leq B \}
\ll \max_{\mathbf{f}} \mathcal{N}_{\mathbf{f}}(B),
\end{equation*}
where the maximum is taken over $\mathbf{f}$ satisfying $f_i \mid e_j - e_k$ for $\{i, j, k\} = \{ 1, 2, 3 \}$ and
$\gcd(f_i, f_j) = 1$ for $i, j \in \{1, 2, 3 \}$, $i \neq j$.

We have thus proved that it is sufficient for our purpose to bound the quantity $\mathcal{N}_{\mathbf{f}}(B)$. To achieve this, for
$i \in \{ 1, 2, 3 \}$, we let $W, A_i, B_i \geq 1$ run over the set of powers of $2$ and we define
$\mathcal{M}_{\mathbf{f}} = \mathcal{M}_{\mathbf{f}}(W,A_1,A_2,A_3,B_1,B_2,B_3)$ as the number of
$(w, a_1, a_2, a_3, b_1, b_2, b_3) \in \mathbb{Z}_{\neq 0} \times \mathbb{Z}_{> 0}^6$ satisfying the equation \eqref{equation final},
the conditions $W < |w| \leq 2 W$, $A_i < a_i \leq 2 A_i$ and $B_i < b_i \leq 2 B_i$, and $\gcd(a_i b_i, a_j b_j) = 1$ for
$i, j \in \{1, 2, 3 \}$, $i \neq j$. We have
\begin{equation*}
\mathcal{N}_{\mathbf{f}}(B) \ll \sum_{\substack{W, A_i, B_i \\ i \in \{ 1, 2, 3 \}}} \mathcal{M}_{\mathbf{f}},
\end{equation*}
where the sum is over $W, A_i, B_i \geq 1$, $i \in \{ 1, 2, 3 \}$, satisfying the inequality
\begin{equation}
\label{height 1}
W^3 A_1^2 A_2^2 A_3^2 B_1 B_2 B_3 \leq B^3.
\end{equation}
Lemma \ref{geometry lemma} gives the upper bound
\begin{equation*}
\mathcal{M}_{\mathbf{f}} \ll W (A_1 A_2 A_3)^{2/3} (B_1 B_2 B_3)^{1/3} M_{\varepsilon}(A_1, A_2, A_3),
\end{equation*}
where $M_{\varepsilon}(A_1, A_2, A_3)$ is defined in lemma \ref{geometry lemma}. Choosing for instance $\varepsilon = 1/4$ and
summing over $W$ using the condition \eqref{height 1}, we finally obtain
\begin{align*}
\mathcal{N}_{\mathbf{f}}(B) & \ll \sum_{\substack{W, A_i, B_i \\ i \in \{ 1, 2, 3 \}}}
W (A_1 A_2 A_3)^{2/3} (B_1 B_2 B_3)^{1/3} M_{1/4}(A_1, A_2, A_3) \\
& \ll B \sum_{\substack{A_i, B_i \\ i \in \{ 1, 2, 3 \}}} M_{1/4}(A_1, A_2, A_3) \\
& \ll B (\log B)^6,
\end{align*}
which completes the first part of the proof of the upper bound in Theorem \ref{Theorem Integral}.

Now let us assume that $e_1 e_2 e_3 \neq 0$ and let us prove that we can take $\delta_{\mathbf{e}} = 4$ in
Theorem \ref{Theorem Integral}. If $n > 2 B^2$ then, since $x = e_i n + d_j d_k w^2 a_i^2 a_j a_k b_i^2$ for
$\{i, j, k\} = \{1, 2, 3 \}$, $|x| \leq B^2$ and $e_1 e_2 e_3 \neq 0$, we have $|d_j d_k| w^2 a_i^2 a_j a_k b_i^2 > B^2$ for
$\{i, j, k\} = \{1, 2, 3 \}$, but this is in contradiction with $|y| \leq B^3$. This implies that
$\{ P \in E_{n,\mathbf{e}}^{\ast}(\mathbb{Z}), \mathcal{H}(P) \leq B \}$ is empty provided that $n > 2 B^2$ so we can
assume that $n \leq 2 B^2$. Therefore, for $\{i, j, k\} = \{1, 2, 3 \}$, we get the conditions
\begin{equation}
\label{3 conditions}
w^2 a_i^2 a_j a_k b_i^2 \leq 3 B^2.
\end{equation}
We now proceed similarly as in the first case. We let $\mathcal{N}_{\mathbf{f}}'(B)$ be the number of
$(w, a_1, a_2, a_3, b_1, b_2, b_3) \in \mathbb{Z}_{\neq 0} \times \mathbb{Z}_{> 0}^6$ satisfying the equation \eqref{equation final},
the inequalities \eqref{3 conditions} and the conditions $\gcd(a_i b_i, a_j b_j) = 1$ for $i, j \in \{1, 2, 3 \}$, $i \neq j$.
Once again, it is sufficient for our purpose to bound $\mathcal{N}_{\mathbf{f}}'(B)$, and we have
\begin{equation*}
\mathcal{N}_{\mathbf{f}}'(B) \ll \sum_{\substack{W, A_i, B_i \\ i \in \{ 1, 2, 3 \}}}
W (A_1 A_2 A_3)^{2/3} (B_1 B_2 B_3)^{1/3} M_{1/4}(A_1, A_2, A_3),
\end{equation*}
where the sum is over $W, A_i, B_i \geq 1$, $i \in \{ 1, 2, 3 \}$, running over the set of powers of $2$ and satisfying,
for $\{i, j, k\} = \{1, 2, 3 \}$, the inequalities
\begin{equation}
\label{height 2}
W^2 A_i^2 A_j A_k B_i^2 \leq 3 B^2.
\end{equation}
Summing over $B_i$, $i \in \{1, 2, 3 \}$, using the conditions \eqref{height 2}, we get
\begin{align*}
\mathcal{N}_{\mathbf{f}}'(B) & \ll B \sum_{\substack{W, A_i \\ i \in \{ 1, 2, 3 \}}} M_{1/4}(A_1, A_2, A_3) \\
& \ll B (\log B)^4,
\end{align*}
as claimed.

Let us now prove the lower bound in Theorem \ref{Theorem Integral}. We can assume by symmetry that
$e_3 > \max \{ e_1, e_2 \}$. Let us denote by $\mathcal{P}_{\mathbf{e}}(B)$ the number of $w \in \mathbb{Z}_{> 0}$ such that
\begin{align*}
4 |(e_1 - e_2)(e_2 - e_3)(e_3 - e_1)| w^3 & \leq B^3, \\
2 |-2 e_1 e_2 + e_1 e_3 + e_2 e_3| w^2 & \leq B^2.
\end{align*}
We remark that $\mathcal{P}_{\mathbf{e}}(B)$ counts one non-trivial integral point on the curves $E_{n,\mathbf{e}}^{\ast}$ for
which $n$ can be written as $n = 2 (2e_3 - e_1 - e_2) w^2$. Note that $2e_3 - e_1 - e_2 > 0$ since
$e_3 > \max \{ e_1, e_2 \}$. Therefore, we have
\begin{equation*}
\sum_{n \geq 1} \# \{ P \in E_{n,\mathbf{e}}^{\ast}(\mathbb{Z}), \mathcal{H}(P) \leq B \} \geq \mathcal{P}_{\mathbf{e}}(B).
\end{equation*}
It is obvious that $\mathcal{P}_{\mathbf{e}}(B) \gg B$, which completes the proof of Theorem \ref{Theorem Integral}.

\subsection{Proof of Corollary \ref{Corollary integral}}

We start by proving the following lemma, which gives a lower bound for the number of $n \leq N$ such that the curve
$E_{n,\mathbf{e}}$ has at least one non-trivial integral point.

\begin{lemma}
\label{lemma lower bound}
We have the lower bound
\begin{equation*}
\# \{ n \leq N, E_{n,\mathbf{e}}^{\ast}(\mathbb{Z}) \neq \emptyset \} \gg N^{1/2}.
\end{equation*}
\end{lemma}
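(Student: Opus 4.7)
The plan is to recycle the explicit construction already used in the proof of the lower bound in Theorem \ref{Theorem Integral}. That argument, after relabeling so that $e_3 > \max\{e_1, e_2\}$, produced for every positive integer $w$ (satisfying two height-type constraints) a non-trivial integral point on the curve $E_{n,\mathbf{e}}^{\ast}$ with $n = 2(2e_3 - e_1 - e_2)w^2$. For the present lemma there is no height restriction, only the size condition $n \leq N$, so the height constraints on $w$ can be dropped entirely and we need only keep track of how many admissible $n$ are produced.

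Concretely, setting $c = 2(2e_3 - e_1 - e_2)$, which is a positive constant depending only on $\mathbf{e}$, the construction shows that for every $w \in \mathbb{Z}_{>0}$ with $w \leq (N/c)^{1/2}$ the integer $n(w) := c w^2$ belongs to $[1,N]$ and satisfies $E_{n(w),\mathbf{e}}^{\ast}(\mathbb{Z}) \neq \emptyset$. The map $w \mapsto c w^2$ is strictly monotonic on $\mathbb{Z}_{>0}$, hence injective, so distinct values of $w$ produce distinct values of $n(w)$. Counting the number of admissible $w$ immediately yields $\gg N^{1/2}$ distinct $n \leq N$ for which the curve has a non-trivial integral point.

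There is no substantive obstacle: the lemma is essentially a repackaging of the construction already performed for the lower bound in Theorem \ref{Theorem Integral}. The only minor items to check are that the coefficient $c$ is strictly positive (which is exactly the WLOG assumption $e_3 > \max\{e_1, e_2\}$ obtained by symmetry in the indices) and that the injectivity of $w \mapsto c w^2$ on positive integers is, of course, immediate. Accordingly I expect a short, self-contained proof of a few lines rather than any technical difficulty.
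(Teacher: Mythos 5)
Your proposal is correct and follows exactly the paper's own argument: both use the explicit family $n = 2(2e_3 - e_1 - e_2)w^2$ with the integral point from the lower-bound part of Theorem \ref{Theorem Integral}, then count the $\gg N^{1/2}$ admissible values of $w$.
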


\begin{proof}
As in the proof of the lower bound in Theorem \ref{Theorem Integral}, we can assume that $e_3 > \max \{ e_1, e_2 \}$ and
thus, if $n$ can be written as $n = 2 (2e_3 - e_1 - e_2) w^2$ for some $w \in \mathbb{Z}_{> 0}$, then the equalities
\begin{align*}
y & = 4 (e_1 - e_2) (e_2 - e_3) (e_3 - e_1) w^3, \\
x & = 2 (-2 e_1 e_2 + e_1 e_3 + e_2 e_3) w^2,
\end{align*}
define a non-trivial integral point on $E_{n,\mathbf{e}}$. Noticing that there are $\gg N^{1/2}$ such integers $n \leq N$
completes the proof of the lemma.
\end{proof}

It seems likely that the lower bound in lemma \ref{lemma lower bound} could be improved by a few
$\log N$ factors, but since this slight improvement would not essentially change the statement of Corollary \ref{Corollary integral},
we have decided not to explore this any further.

Let $A > 6$ be fixed. Let $\mathcal{N}_A(N)$ be the number of $n \leq N$ such that there exists
$P \in E_{n,\mathbf{e}}^{\ast}(\mathbb{Z})$ satisfying
\begin{equation*}
\mathcal{H}(P) \leq n^{1/2} (\log n)^{-A}.
\end{equation*}
We have
\begin{equation*}
\mathcal{N}_A(N) \leq
\sum_{n \leq N} \# \{ P \in E_{n,\mathbf{e}}^{\ast}(\mathbb{Z}), \mathcal{H}(P) \leq N^{1/2} (\log N)^{-A} \}.
\end{equation*}
The upper bound in Theorem \ref{Theorem Integral} implies
\begin{equation*}
\mathcal{N}_A(N) \ll N^{1/2} (\log N)^{-A+6},
\end{equation*}
so lemma \ref{lemma lower bound} shows that
$\mathcal{N}_A(N) = o( \# \{ n \leq N, E_{n,\mathbf{e}}^{\ast}(\mathbb{Z}) \neq \emptyset \} )$, which concludes the proof of
Corollary~\ref{Corollary integral}.

\section{Rational points on elliptic fibrations}

\subsection{Proof of Theorem \ref{Theorem upper}}

Recall that $V_{\mathbf{e},Q} \subset \mathbb{P}(3,2,1,1)$ is defined by the equation
\begin{equation}
\label{equation surface 2}
y^2 = (x - e_1 Q(u,v)) (x - e_2 Q(u,v)) (x - e_3 Q(u,v)).
\end{equation}
Thus, we have
\begin{align*}
N_{U_{\mathbf{e},Q},H}(B) & \ll \sum_{\substack{|u|, |v| \leq B \\ Q(u,v) \neq 0}}
\# \{ (y,x) \in \mathbb{Z}_{\neq 0} \times \mathbb{Z}, |y| \leq B^3, |x| \leq B^2, \eqref{equation surface 2} \} \\
& \ll \sum_{n \in \mathbb{Z}_{\neq 0}}
\# \{ (y,x) \in \mathbb{Z}_{\neq 0} \times \mathbb{Z}, |y| \leq B^3, |x| \leq B^2, \eqref{equation curve} \}
\sum_{\substack{|u|, |v| \leq B \\ Q(u,v) = n}} 1.
\end{align*}
Since $Q$ is non-degenerate, we have
\begin{equation*}
\# \{ (u,v) \in \mathbb{Z}^2, |u|, |v| \leq B, Q(u,v) = n \} \ll B^{\varepsilon}.
\end{equation*}
As a result, we get
\begin{equation*}
N_{U_{\mathbf{e},Q},H}(B) \ll B^{\varepsilon}
\sum_{n \in \mathbb{Z}_{\neq 0}} \# \{ P \in E_{n,\mathbf{e}}^{\ast}(\mathbb{Z}), \mathcal{H}(P) \leq B \}.
\end{equation*}
We note that the sum in the right-hand side can be rewritten as
\begin{equation*}
\sum_{n \geq 1} \# \{ P \in E_{n,\mathbf{e}}^{\ast}(\mathbb{Z}), \mathcal{H}(P) \leq B \}
+ \sum_{n \geq 1} \# \{ P \in E_{n,-\mathbf{e}}^{\ast}(\mathbb{Z}), \mathcal{H}(P) \leq B \}.
\end{equation*}
Therefore, using twice the upper bound in Theorem \ref{Theorem Integral}, we obtain
\begin{equation*}
N_{U_{\mathbf{e},Q},H}(B) \ll B^{1 + \varepsilon},
\end{equation*}
which ends the proof of Theorem \ref{Theorem upper}.

\subsection{Proof of Corollary \ref{Corollary beta}}

We proceed exactly as in the proof of Theorem \ref{Theorem upper}. We have
\begin{equation*}
N_{U_{\mathbf{e}},H}(B) \ll \sum_{n \in \mathbb{Z}_{\neq 0}}
\# \{ (y,x) \in \mathbb{Z}_{\neq 0} \times \mathbb{Z}, |y| \leq B^3, |x| \leq B^2, \eqref{equation curve} \}
\sum_{\substack{|u|, |v| \leq B \\ u v = n}} 1.
\end{equation*}
Then, if $n \leq B^2$, we have
\begin{align*}
\# \{ (u,v) \in \mathbb{Z}^2, |u|, |v| \leq B, u v = n \} & \leq 2 \tau(n) \\
& \ll n^{1/ \log \log n} \\
& \ll B^{2 / \log \log B},
\end{align*}
and this upper bound also holds if $n > B^2$. This shows that
\begin{equation*}
N_{U_{\mathbf{e}},H}(B) \ll B^{2 / \log \log B} \sum_{n \in \mathbb{Z}_{\neq 0}}
\# \{ P \in E_{n,\mathbf{e}}^{\ast}(\mathbb{Z}), \mathcal{H}(P) \leq B \}.
\end{equation*}
As in the proof of Theorem \ref{Theorem upper}, using twice the upper bound in Theorem \ref{Theorem Integral}, we obtain
\begin{equation}
\label{upper}
N_{U_{\mathbf{e}},H}(B) \ll B^{1 + 3 / \log \log B}.
\end{equation}

Now let us prove a lower bound for $N_{U_{\mathbf{e}},H}(B)$. Let us assume by symmetry that $e_3 > \max \{ e_1, e_2 \}$ so that
$2e_3 - e_1 - e_2 > 0$, and let us denote by $v_2(m)$ the $2$-adic valuation of an integer $m \geq 1$.
Let $\mathcal{R}_{\mathbf{e}}(B)$ be the number of $(y, x, u, v) \in \mathbb{Z}_{\neq 0}^4$ such that
$\max \{ |y|^{1/3}, |x|^{1/2}, |u|, |v| \} \leq B$ and which can be written as
\begin{align*}
y & = 4 (e_1 - e_2) (e_2 - e_3) (e_3 - e_1) w_1^3 w_2^3, \\
x & = 2 (-2 e_1 e_2 + e_1 e_3 + e_2 e_3) w_1^2 w_2^2, \\
u & = 2^{1 + v_2(2 e_3 - e_1 - e_2)} w_1^2, \\
v & = (2 e_3 - e_1 - e_2) 2^{- v_2(2 e_3 - e_1 - e_2)} w_2^2,
\end{align*}
where $(w_1, w_2) \in \mathbb{Z}^2_{>0}$ satisfies $\gcd(w_1, (2e_3 - e_1 - e_2) w_2) = \gcd(w_2, 2) = 1$. Since $\gcd(u,v)=1$ in the
parametrization above, it is immediate to check that
\begin{equation*}
N_{U_{\mathbf{e}},H}(B) \geq \mathcal{R}_{\mathbf{e}}(B).
\end{equation*}
Since we clearly have $\mathcal{R}_{\mathbf{e}}(B) \gg B$, we have obtained the lower bound
\begin{equation}
\label{lower}
B \ll N_{U_{\mathbf{e}},H}(B).
\end{equation}
Let us note that improving this lower bound by a few $\log B$ factors would not be hard. However, as already explained in the
introduction, proving the lower bound of the expected order of magnitude for $N_{U_{\mathbf{e}},H}(B)$ does not seem to be immediate.

Recalling the definition \eqref{definition beta} of $\beta_{U_{\mathbf{e}}}(B)$, we see that the two bounds \eqref{upper}
and \eqref{lower} complete the proof of Corollary \ref{Corollary beta}.

\bibliographystyle{amsalpha}
\bibliography{biblio}

\end{document}